\newenvironment{claim}[1]{\par\noindent\textbf{Claim:}\space#1}{}
\newenvironment{claimproof}[1]{\par\noindent\emph{Proof of the Claim:}\space#1}
\newtheorem{theorem}{Theorem}[section]
\newtheorem{lemma}{Lemma}[section]
\newtheorem{corollary}{Corollary}[section]
\theoremstyle{definition}
\newtheorem{remark}{Remark}[section]
\newtheorem{definition}{Definition}[section]
\newtheorem{example}{Example}[section]
\newcommand{\N}{\mathbb{N}}
\newcommand{\Z}{\mathbb{Z}}
\newcommand{\s}{\sigma}
\newcommand{\vt}{\vartheta}
\newcommand{\Pscr}{\mathscr{P}}
\newcommand{\Qscr}{\mathscr{Q}}
\newcommand{\Cscr}{\mathscr{C}}
\newcommand{\dd}{\delta}
\newcommand{\rootd}{\sqrt{\delta}}
\newcommand{\tP}{{\widetilde{P}}}
\newcommand{\sign}[1]{\textup{sign}_P^\eta \langle #1 \rangle_\s}
\newcommand{\ssign}{\textup{sign}}
\newcommand{\sssign}[1]{\textup{sign}_P \langle #1 \rangle_\s}
\newcommand{\qf}[1]{\langle #1\rangle}
\newcommand{\x}{\times}
\newcommand{\ox}{\otimes}
\newcommand{\simto}{\overset\sim\rightarrow}
\DeclareMathOperator{\id}{id}
\DeclareMathOperator{\Sym}{Sym}
\DeclareMathOperator{\Nil}{Nil}
\DeclareMathOperator{\Int}{Int}
\DeclareMathOperator{\Sign}{sign}
\DeclareMathOperator{\Trd}{Trd}
\title[Maximal Prepositive Cones on Quaternion Algebras]{Maximal Prepositive Cones on Quaternion Algebras with Involution}
\author[A. Leader]{Andrew Leader}
\address{St John's College, University of Cambridge, Cambridge CB2~1TP,~UK}
\email{andrewleader@pm.me}
\subjclass{13J30, 16W10, 06F25, 16K20, 11E39}
\keywords{Real algebra, Quaternion algebras with involution, Orderings, Hermitian forms}
\begin{document}

\begin{abstract}
    We give a description of prepositive cones---a notion of ordering on
    algebras with involution introduced by Astier and Unger---in the specific
    context of quaternion algebras with involution. Our main result establishes
    that, for a broad class of quaternion algebras with involution, every
    prepositive cone is maximal.
\end{abstract}

\maketitle

\section{Introduction}

In their investigation of central simple algebras with involution from a real
algebraic point of view, Astier and Unger introduced the notion of prepositive
cones in \cite{PosCones} (see also \cite{A-U-sign}). 
Positive cones are defined as prepositive cones that
are maximal (with respect to inclusion). At the time of writing, examples of
prepositive cones that are not maximal remain elusive, contrary to what is
claimed in \cite[Remark~3.3(2)]{PosCones} (in fact, the authors conveyed in a
private communication that their method did not work and that they intend to 
retract this part of
the remark).
Already in the simplest non-trivial
setting, namely quaternion algebras with involution, it is not clear how to
find such examples. Indeed, in this note we show that the prepositive cones on
a large class of quaternion algebras with involution are all maximal. Up to a
small number of technical results and tools from \cite{Knebusch} and
\cite{PosCones}, our approach is entirely elementary. We also use some results
from elementary calculus which remain valid over arbitrary real closed fields,
cf. \cite[Section~1.3]{S24}.

\section{Preliminaries}\label{Section prelims}

Let $F$ be a formally real field with space of orderings $X_F$. Given an
ordering $P \in X_F$ and $a,b \in F$, we write $a \leq_P b$ if $b-a \in P$ and
let $F_P$ denote a real closure of $F$ at $P$. We denote the unique ordering on
$F_P$ by $\widetilde{P}$. An \emph{$F$-algebra with involution} is a pair
$(A,\s)$, consisting of a finite-dimensional simple $F$-algebra $A$ with centre
a field $K=Z(A)$ and an involution $\s:A\to A$ such that $F=K\cap \Sym(A,\s)$,
where $\Sym(A,\s):=\{x\in A \mid \s(x)=x\}$. Since $F$ is the fixed field of
$\s|_K$, we have $[K:F]\leq 2$. We also recall that $\dim_K(A)=m^2$ for some
$m\in\N$, cf. \cite[1.A]{BOI}.

We say the involution $\s$ is of \textit{the first kind} if $\s|_K$ is the
identity (in which case $K=F$) and of \textit{the second kind} (or
\textit{unitary}) if $\s|_K$ is an automorphism of order~2 (in which case
$[K:F]=2$). If $\s$ is an involution of the first kind, $\s$ can be further
classified as either \textit{orthogonal} if $\dim_K \Sym(A,\s)=m(m+1)/2$ or
\textit{symplectic} if $\dim_K \Sym(A,\s)=m(m-1)/2$. We refer to \cite[2.A,
2.B]{BOI} for further details.

For any subset $S\subseteq A$, we define $S^\x:=S\cap A^\x$, where $A^\x$
denotes the set of invertible elements of $A$. For $u\in \Sym(A,\s)$ we denote
the hermitian form $(x,y)\mapsto \s(x)uy$ by $\qf{u}_\s$. One can show that
$\qf{u}_\s$ is nonsingular if and only if $u\in A^\x$. For $n\in \N$ the
involution $(a_{ij})\mapsto (\s(a_{ji}))$ on $M_n(A)$ is denoted $\s^t$, where
$t$ denotes transposition. For $v \in A^\x$, we denote the inner automorphism
$x \mapsto vxv^{-1}$ by $\Int(v)$.

In \cite{Knebusch} and \cite{A-U-prime}, Astier and Unger studied signatures of
hermitian forms over $F$-algebras with involution. If $h$ is a nonsingular
hermitian form over $(A,\s)$ and $P\in X_F$, they computed the signature of $h$
at $P$ via scalar extension to $F_P$ and hermitian Morita theory. The
computation is relative to the choice of a \emph{reference form} $\eta$, which
can be taken equal to $\qf{1}_\s$ when the involution $\s$ is positive at 
all $Q\in X_F\setminus \Nil[A,\s]$ (see Definition~\ref{def nil orderings} 
below),
i.e., when the involution trace form $(x,y) \mapsto \Trd_A (\s(x)y)$ is
positive definite at all $Q\in X_F\setminus \Nil[A,\s]$, cf. \cite[Definition~4.1,  Remark~4.2 and Corollary~4.6]{A-U-PS}. The
end result is an integer $\Sign_P^\eta h \in \Z$, the \emph{$\eta$-signature of
$h$ at $P$}. Note that replacing $\eta$ by $-\eta$ multiplies the signature map
$\Sign^\eta_\bullet h : X_F \to \Z$ by $-1$, cf. \cite[Propositions~3.2 and
3.3]{A-U-prime}. We have that $\Sign_P^\eta = 0$ whenever $P$ is a nil-ordering
of $(A, \s)$, defined as follows (\cite[Definition~3.7]{Knebusch}):
  
\begin{definition}\label{def nil orderings} 
    We define $\Nil[A,\s]$ to be the set 
    \[
        \begin{cases}
            \{ P \in X_F \ | \ A \ox_F F_P \cong M_{m/2}((-1,-1)_{F_P}) \} & \text{if $\s$ is orthogonal},\\
            \{ P \in X_F \ | \ A \ox_F F_P \cong M_m(F_P) \} & \text{if $\s$ is symplectic},\\
            \{ P \in X_F \ | \ Z(A) \ox_F F_P \cong F_P \x F_P \} & \text{if $\s$ is unitary},
        \end{cases}
    \]
    and call it the set of \emph{nil-orderings} of $(A,\s)$, where the square brackets indicate that $\Nil[A, \s]$ depends only on the Brauer class of $A$ and the type of $\s$.
\end{definition}

\section{Positive Cones}\label{Sect Pos Cones}

We recall \cite[Definition~3.1]{PosCones}:

\begin{definition}\label{def ppc}
    A \textit{prepositive cone on $(A,\s)$} is a subset $\Pscr$ of Sym$(A,\s)$ such that
    \begin{itemize}
        \setlength{\itemindent}{5mm}
        \item[(P1)] $\Pscr \neq \varnothing$;
        \item[(P2)] $\Pscr + \Pscr \subseteq \Pscr$;
        \item[(P3)] $\s(x) \cdot \Pscr \cdot x \subseteq \Pscr$ for every $x \in A$;
        \item[(P4)] $\Pscr_F := \{ u \in F \ | \ u\Pscr \subseteq \Pscr \}$ is an ordering on $F$;
        \item[(P5)] $\Pscr \cap -\Pscr = \{ 0 \}$.
    \end{itemize}
    A prepositive cone $\Pscr$ is \textit{over} $P \in X_F$ if $\Pscr_F = P$. A \textit{positive cone} is a prepositive cone that is maximal with respect to inclusion.
\end{definition}

\begin{remark}\label{remark no pcs over nil}
    By \cite[Proposition~6.6]{PosCones}, positive cones exist only over non-nil orderings. Furthermore, there are exactly two positive cones over any such ordering. If $\Pscr$ is one of them, then $-\Pscr$ is the other, cf. \cite[Theorem~7.5]{PosCones}.
\end{remark}

Given $\varnothing \neq S \subseteq \Sym(A,\s)$ and $P \in X_F$, we denote by $\Cscr_P(S)$ the closure of $S$ under the three operations required by properties (P2), (P3), and (P4) from Definition \ref{def ppc}: 
\[ \Cscr_P (S) := \Bigl\{ \sum_{i=1}^{k} u_i \s(x_i) s_i x_i \ \Big| \ k \in \mathbb{N}, \ u_i \in P, \ x_i \in A, \ s_i \in S \Bigr\}, \]
which is a prepositive cone over $P$ if and only if $\Cscr_P(S)$ satisfies property (P5), cf. \cite[Definition~3.17]{PosCones}.

\begin{example}\label{ex split transpose}
    This is a straightforward modification of \cite[Example~4.3]{A-U-pc-gauge}.
    Let $P\in X_F$ and let $(E,\vt)$ denote either $(F, \id)$, $(F(\rootd),
    \iota)$ with $\dd<_P 0$, or $((a,b)_F, \gamma)$ where $(a,b)_F$ is a
    quaternion division algebra, $\iota$ denotes conjugation, and $\gamma$
    denotes quaternion conjugation. Then $\Sym(E,\vt)=F$, so the only
    (pre)positive cones on $(E,\vt)$ over $P$ are $P$ and $-P$. Furthermore,
    for any $n\in\N$ the only two (pre)positive cones on $(M_n(E),\vt^t)$ are
    the set of positive semidefinite $n\x n$ matrices with respect to $P$ and
    the set of negative semidefinite $n\x n$ matrices with respect to $P$, cf.
    \cite[Proposition~4.10]{PosCones}.

\end{example}

\begin{example}[{\cite[Example~3.13]{PosCones}}]\label{ex div alg}
    Let $(A,\s)$ be a division $F$-algebra with involution and let $P \in X_F \setminus \Nil [A, \s]$. Defining
    \[
        m_P(A,\s) := \max\{\sign{a} \ | \ a \in \Sym(A,\s)^\x\}
    \]
    and
    \[
        \mathscr{M}^\eta_P(A,\s) := \{ a \in \Sym(A,\s)^\x \ | \ \sign{a} = m_P(A,\s) \} \cup \{ 0 \},
    \]
    we have that $\mathscr{M}^\eta_P(A,\s)$ and $-\mathscr{M}^\eta_P(A,\s)$ are the only positive cones on $(A,\s)$ over $P$ by \cite[Proposition~7.1]{PosCones}. 
\end{example}

\section{Quaternion Algebras with Involution} \label{Sect Quat alg}

In this paper we are only interested in $F$-algebras with involution for which $\dim_K A=4$, the \emph{quaternion $F$-algebras with involution}. 
Recall that a quaternion algebra over $K$ is a central simple $K$-algebra
with $K$-basis $\{1,i,j,k\}$ subject to the relations 
\[
    i^2 = a, \quad j^2 = b, \quad ij = k = -ji, 
\]
for some $a,b \in K^\x$, and is denoted by $(a,b)_K$. Furthermore, $A:=(a,b)_K$
is either a division algebra or is \emph{split}, i.e., $A \cong M_2(K)$. The
canonical involution $\gamma$ on $A$ (called quaternion conjugation), defined
by $\gamma(a_0 + a_1i + a_2j +a_3k) = a_0 - a_1i - a_2j -a_3k$ for
$a_0,a_1,a_2,a_3 \in K$, is the unique symplectic involution on $A$. By the
Skolem-Noether theorem, any orthogonal involution $\s$ on $A$ is of the form
$\s=\Int(v)\circ \gamma$, for some invertible element $v \in A\setminus K$ such
that $\gamma(v)=-v$. The element $v$ is uniquely determined by $\s$ up to a
factor in $K^\x$, cf. \cite[Proposition~2.21]{BOI}. By a theorem of Albert, cf.
\cite[Proposition~2.22]{BOI}, if $\s$ is a unitary involution on $A$, there
exists a unique quaternion $F$-subalgebra $A_0\subset A$ such that $A=A_0\ox_F
K$ and $\s=\gamma_0\ox \iota$, where $\gamma_0$ is the canonical involution on
$A_0$ and $\iota=\s|_K$.

For the problem considered in this paper, we only need to consider nonsingular
hermitian forms $\qf{u}_\s$ over quaternion algebras with involution. We recall
how to compute $\eta$-signatures in this particular case, and refer to
\cite{Knebusch} and \cite{A-U-prime} for the general case.
  
Let $(A,\s)$ be a quaternion $F$-algebra with involution. Let $\eta$ be a
reference form for $(A,\s)$ and let $h=\qf{u}_\s$ with $u\in \Sym(A,\s)^\x$.
Let $P\in X_F$. If $A\ox_F F_P\cong M_2(F_P)$ and $\s$ is symplectic, or if
$A\ox_F F_P\cong (-1,-1)_{F_P}$ and $\s$ is orthogonal, or if $K\ox_F F_P \cong
F_P\x F_P$ and $\s$ is unitary, then $P$ is a nil-ordering by Definition
\ref{def nil orderings} and we set $\Sign^\eta_P h:=0$.
  
In the remaining cases (i.e., when $P$ is not a nil-ordering) we proceed as 
follows: 

\begin{enumerate}[label={\rm{(\alph*)}}]

\item If $\s$ is orthogonal, we consider a splitting isomorphism
    \[
        \lambda_P: A\ox_F F_P\simto M_2(F_P).
    \]
    By the Skolem-Noether theorem the involution $\s\ox\id$ corresponds to the
    involution $\Int(\Phi_P)\circ t$ under $\lambda_P$, for some invertible
    symmetric matrix $\Phi_P \in M_2(F_P)$. Then $\Phi_P^{-1} \cdot
    \lambda_P(u\ox 1)$ is the matrix of a nonsingular symmetric bilinear form
    $b_h$ over $F_P$ with Sylvester signature $\Sign_\tP b_h$. We compute the
    signature at $\tP$ of the reference form $\eta$ in a similar manner. (The
    details are not important for our purposes, but we do note that this
    signature is non-zero, cf. \cite[Propositions~3.2 and 3.3]{A-U-prime}.) Let
    $s_P\in\{-1,+1\}$ denote the sign of this signature. Then we define
    $\Sign_P^\eta h:= s_P\cdot \Sign_\tP b_h$.\smallskip

\item If $\s$ is symplectic, and $A\ox_F F_P\cong (-1,-1)_{F_P}$, then $A$ is
    a division algebra and $\s=\gamma$, quaternion conjugation. Then
    $u\in\Sym(A,\gamma)^\x=F^\x$ and we define $\Sign_P^\eta h$ to be the sign
    of $u$ at $P$. (We note that we can take $\eta=\qf{1}_\gamma$ since 
    $\gamma$ is
    positive at $P$.)\smallskip

\item If $\s$ is unitary, then $K\ox_F F_P\cong F_P(\sqrt{-1})$, and we
    compute $\Sign_P^\eta h$ as in the split-orthogonal case (a) above with the
    following obvious changes: we replace $(F_P, \id)$ by $(F_P(\sqrt{-1}),
    \iota_P)$, where $\iota_P$ denotes conjugation; $\Phi_P \in
    M_2(F_P(\sqrt{-1})) $ is an invertible hermitian matrix this time and $b_h$
    is a nonsingular hermitian form over $(F_P(\sqrt{-1}), \iota_P )$.
\end{enumerate}
\section{Main Result}

At the time of writing there are no known examples of prepositive cones that
are not maximal. The main result of this paper shows that such examples cannot
be found for a large class of quaternion algebras with involution. We will use
the order topology on $(F,P)$ and $(F_P,\tP)$, cf. \cite[Section~2.6]{KSU}.

\begin{theorem}\label{theorem main}
    Let $(A,\s)$ be a quaternion $F$-algebra with involution and let $P \in X_F$. If $F$ is dense in $F_P$, then all prepositive cones on $(A,\s)$ over $P$ are maximal.
\end{theorem}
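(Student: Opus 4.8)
\subsection*{Proof sketch (plan).}

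The plan is as follows. First I would reduce to the case $P \notin \Nil[A,\s]$ and, moreover, to the situation where our prepositive cone sits inside a prescribed positive cone. Every prepositive cone $\Pscr$ over $P$ is contained in a positive cone over $P$: the union of a chain of prepositive cones over $P$ is again one (the only axiom needing care is (P4), and the associated preordering of $F$ is proper by (P5) applied to the union and contains the ordering $P$, hence equals it), so Zorn's lemma applies. By Remark \ref{remark no pcs over nil} this forces $P \notin \Nil[A,\s]$ (otherwise no positive, hence no prepositive, cone over $P$ exists and the statement is vacuous), and it gives that $\Pscr$ is contained in one of the two positive cones over $P$; replacing $\Pscr$ by $-\Pscr$ if necessary we call this positive cone $\Qscr$, so that $\Pscr \subseteq \Qscr$, and it remains to prove $\Pscr = \Qscr$.

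Next I would split on whether $A$ is a division algebra. If it is not, then $A \cong M_2(F)$ ($\s$ of the first kind) or $A \cong M_2(K)$ ($\s$ unitary); using the classification of Section \ref{Sect Quat alg}, hermitian Morita theory then identifies $(A,\s)$ with one of the pairs $(E,\vt)$ of Example \ref{ex split transpose} (for $\s$ orthogonal this goes via the even Clifford algebra of the binary quadratic form to which $\s$ is adjoint, which is $F$ or a quadratic extension of $F$; the symplectic case gives a nil-ordering and is already excluded). Since $P$ is non-nil and $\Sym(E,\vt)=F$, Example \ref{ex split transpose} shows $(E,\vt)$ has exactly the two prepositive cones $P,-P$ over $P$; transporting along the Morita equivalence, which respects prepositive cones \cite{PosCones}, shows $(A,\s)$ has exactly two prepositive cones over $P$, both maximal, so $\Pscr = \Qscr$. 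This case does not use the density hypothesis.

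Assume now that $A$ is a division algebra. Then Example \ref{ex div alg} gives $\Qscr = \mathscr{M}^\eta_P(A,\s)$, so $\Qscr \setminus \{0\} = \{\, a \in \Sym(A,\s)^\x \mid \sign{a} = m_P(A,\s)\,\}$, which is \emph{open} in $\Sym(A,\s)$ because $a \mapsto \sign{a}$ is locally constant on $\Sym(A,\s)^\x$ (it is a Sylvester signature after scalar extension to $F_P$). I would then use, repeatedly, that since $F$ is dense in $F_P$ the algebra $A$ is dense in $A \ox_F F_P$ for the order topology, that the maps involved (notably $x \mapsto \s(x)ax$) are polynomial hence continuous, and that over the real closed field $F_P$ the algebra $A \ox_F F_P$ splits and any two nonsingular hermitian forms of a given rank and signature are isometric. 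Three points then finish the argument. (a) $\Pscr$ has nonempty interior: for $u \in \Pscr \setminus \{0\}$ the elements $\s(x)ux$, $x \in A\ox_F F_P$, span $\Sym(A\ox_F F_P,\s\ox\id)$ over $F_P$; as being a basis is a nonempty open condition, by density there are $x_1,\dots,x_n \in A$ ($n = \dim_F\Sym(A,\s)$) with $\s(x_1)ux_1,\dots,\s(x_n)ux_n$ an $F$-basis of $\Sym(A,\s)$, so $\Pscr \supseteq \sum_i P\cdot\s(x_i)ux_i$, which has nonempty interior. (b) For each $u_0 \in \Qscr\setminus\{0\}$ the orbit $\{\s(x)u_0x \mid x \in A^\x\}$ is dense in $\Qscr\setminus\{0\}$: given $s \in \Qscr\setminus\{0\}$, the forms $\qf{u_0}_\s$ and $\qf{s}_\s$ have the same rank and $\eta$-signature, hence become isometric over $F_P$, i.e.\ $\s(g)(u_0\ox 1)g = s \ox 1$ for some $g \in (A\ox_F F_P)^\x$; approximating $g$ by $x \in A^\x$ makes $\s(x)u_0x$ arbitrarily close to $s$. (c) Hence $\Pscr = \Qscr$: the interior $\operatorname{int}\Pscr$ is a nonempty open subset of $\Qscr\setminus\{0\}$ (it omits $0$ by (P5)) which is stable under the $F$-linear homeomorphisms $a \mapsto \s(x)ax$, $x \in A^\x$ (because $\s(x)\Pscr x = \Pscr$); were $(\Qscr\setminus\{0\})\setminus\operatorname{int}\Pscr$ nonempty it would be a nonempty relatively closed stable set containing, by (b), a dense orbit, hence would be all of $\Qscr\setminus\{0\}$, contradicting (a); therefore $\Qscr\setminus\{0\} \subseteq \operatorname{int}\Pscr \subseteq \Pscr$, and since $0 \in \Pscr$ we conclude $\Qscr \subseteq \Pscr$, so $\Pscr = \Qscr$.

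The hard part is point (b): upgrading the purely \emph{algebraic} transitivity of the congruence action on maximal-signature hermitian forms over $F_P$ to a genuine \emph{topological} density statement for the $A^\x$-orbits inside $\Qscr\setminus\{0\}$, which forces one to treat the order topology on $\Sym(A,\s)$ and on $A\ox_F F_P$ carefully over a possibly non-archimedean real closed field; this is also the point where the elementary calculus over real closed fields from \cite{S24} enters. By comparison, the non-division case is bookkeeping on top of Example \ref{ex split transpose}, and the reduction in the first paragraph is routine.
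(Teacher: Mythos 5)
Your proposal is correct in outline, but it takes a genuinely different route from the paper. The paper handles the split case the same way in substance (reduce $\s$ to $\iota^t$ via Skolem--Noether and \cite[Proposition~4.4]{PosCones}, then apply Example~\ref{ex split transpose}), but in the division case it proceeds by explicit case analysis on the type of $\s$ and the signs of $a,b,\dd$ at $P$: it writes down a concrete splitting isomorphism $\lambda_P$, derives a determinant criterion for $\sign{d}=\pm 2$, shows that any prepositive cone $\Qscr\subseteq\Pscr$ contains $\Cscr_P(u_0)$ for one explicit element $u_0$ (namely $1$, $k\ox\rootd$, or $i\ox\rootd$, depending on the case), and then proves $\Pscr=\Cscr_P(u_0)$ by an explicit completion-of-squares identity $d=\s(c)u_0c+(d_*-f(\beta))\s(1)u_0 1$, density entering only through Lemma~\ref{lemma minimum} to find $\beta\in F$ with $f(\beta)$ in a prescribed interval. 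Your argument replaces all of this by a soft topological one: density gives that the order topology on $\Sym(A,\s)$ is the subspace topology from $\Sym(A\ox_F F_P,\s\ox\id)$ and that $A$ is dense there; then (a) every prepositive cone has nonempty interior, (b) each congruence orbit $\{\s(x)u_0x : x\in A^\x\}$ is dense in $\Qscr\setminus\{0\}$ because rank-one hermitian forms of equal signature become isometric over the real closed field, and (c) a relatively closed, congruence-stable subset of $\Qscr\setminus\{0\}$ meeting the complement of $\operatorname{int}\Pscr$ would have to be everything, contradicting (a). Each approach buys something: yours is uniform (no case distinctions, no explicit matrices) and, as far as I can see, applies verbatim to any $F$-division algebra with involution via Example~\ref{ex div alg}, hence proves a more general statement; the paper's is more elementary and yields the extra, constructive information that each positive cone equals $\Cscr_P(u_0)$ of a single explicit element. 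If you write yours up, the points needing care are exactly the ones you flag plus two more: the spanning claim in (a) (that the congruence orbit of an invertible symmetric element spans $\Sym(A\ox_F F_P,\s\ox\id)$ -- true, by a Gram-matrix argument, but it must be proved), the appeal to Morita invariance of \emph{pre}positive cones in the split case, and the parenthetical about the even Clifford algebra, which is a red herring: the clean reduction there is $\s=\Int(v)\circ\iota^t$ together with \cite[Proposition~4.4]{PosCones}, after which Example~\ref{ex split transpose} applies directly.
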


\begin{remark} 
  In the process of proving Theorem~\ref{theorem main} we obtain the following
  observations given $P\in X_F\setminus \Nil[A,\s]$ (noting that the theorem is 
  vacuously true if $P\in \Nil[A,\s]$ by Remark~\ref{remark no pcs over nil}), 
  the first two of which do not require $F$ to be dense in $F_P$:
  
  \begin{enumerate}[label={\rm{(\arabic*)}}]
  \item All prepositive cones  on quaternion $F$-algebras with involution 
  over $P$ are maximal if and only if this is the case 
  for all $(A,\s)$ with $A$ a quaternion division algebra and such that, 
  either 
    \begin{enumerate}[label={\rm{(\roman*)}}]
      \item $A=(a,b)_F$ where $a,b>_P0$ and
      $\s$ is the orthogonal involution that fixes $i$ and $j$, or
      \item $A=(a,b)_F \ox_F F(\sqrt{\delta})$ where $a,b>_P0$, $\delta<_P 0$ and
      $\s$ is the unitary involution that restricts to the canonical 
      symplectic involution on $(a,b)_F$.
    \end{enumerate}

    \item In case {\rm (i)}, all prepositive cones
    are maximal if and only if $\Cscr_P(1)$ is maximal. In case {\rm (ii)}, all 
    prepositive cones are maximal if and only if $\Cscr_P(k\ox \sqrt{\delta})$ 
    is maximal.
    
    \item If $F$ is dense in $F_P$, then $\Cscr_P(1)$ 
    is maximal in case {\rm (i)}, 
    and $\Cscr_P(k\ox \sqrt{\delta})$  is maximal in 
    case {\rm (ii)}.    
  \end{enumerate}
  We also note that by hermitian Morita theory, the result naturally extends
  to any $F$-algebra of index $2$ with involution $(A,\s)$. Indeed, 
  this follows from  \cite[Theorem~4.2]{PosCones} since
  any quaternion 
  division algebra that is Brauer equivalent to $A$ will carry an involution
  of the same type as $\s$ (this is clear if $\s$ is of the first kind and
  follows from \cite[Theorem~3.1]{BOI} otherwise).
\end{remark}

    Before proving the theorem, we mention a consequence and an auxiliary result.

\begin{corollary}
    Let $(A,\s)$ be a quaternion $F$-algebra with involution and let $P$ be an archimedean ordering on $F$. Then all prepositive cones on $(A,\s)$ over $P$ are maximal.  
\end{corollary}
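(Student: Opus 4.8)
\emph{Proof sketch for the Corollary.} The plan is to reduce to Theorem~\ref{theorem main} by checking its hypothesis: an archimedean ordering $P$ on $F$ forces $F$ to be dense in $F_P$.

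By Hölder's theorem, an archimedean ordered field $(F,P)$ admits an order-preserving field embedding into $\mathbb{R}$; I would fix such an embedding and regard $F$ as a subfield of $\mathbb{R}$ whose induced ordering is $P$. Let $R$ be the relative algebraic closure of $F$ in $\mathbb{R}$ (the set of real numbers algebraic over $F$), with the ordering inherited from $\mathbb{R}$. One checks that $R$ is real closed: it is formally real as a subfield of $\mathbb{R}$; each of its positive elements has a square root in $\mathbb{R}$, which is algebraic over $F$ and hence lies in $R$; and every monic polynomial of odd degree over $R$ has a real root by the intermediate value theorem, which is algebraic over $R$, hence over $F$, hence lies in $R$. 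Thus $R$ is an algebraic extension of $F$, real closed, whose ordering restricts to $P$ on $F$, so it is a real closure of $F$ at $P$. By the uniqueness of real closures we may take $F_P=R$, and in particular $F_P\subseteq\mathbb{R}$.

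Finally, $F$ contains $\mathbb{Q}$, and $\mathbb{Q}$ is dense in $F$ in the order topology precisely because $(F,P)$ is archimedean; since $\mathbb{Q}$ is dense in $\mathbb{R}$, so is $F$. Hence, given any $x,y\in F_P$ with $x<_{\tP}y$, the nonempty open interval $(x,y)\subseteq\mathbb{R}$ contains a point of $F$, i.e.\ $F$ is dense in $F_P$ with respect to the order topology. Theorem~\ref{theorem main} now applies and gives that all prepositive cones on $(A,\s)$ over $P$ are maximal. The argument is entirely routine; the only step warranting care is the identification of a real closure of $F$ at $P$ with a subfield of $\mathbb{R}$, which rests on the uniqueness of real closures.
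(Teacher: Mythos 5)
Your proposal is correct and takes essentially the same route as the paper: the entire content of the corollary is that an archimedean ordering forces $F$ to be dense in $F_P$, after which Theorem~\ref{theorem main} applies. The paper simply cites \cite[Theorem~1.1.5]{PD} for this density fact, whereas you prove it from scratch (Hölder embedding of $(F,P)$ into $\mathbb{R}$, identification of $F_P$ with the relative algebraic closure of $F$ in $\mathbb{R}$ via uniqueness of real closures, then density of $\mathbb{Q}\subseteq F$ in $\mathbb{R}$); this self-contained argument is sound and is exactly what the cited result encapsulates.
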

\begin{proof}
    If $P$ is archimedean, then it follows from \cite[Theorem~1.1.5]{PD} that $F$ is dense in $F_P$.
\end{proof}

\begin{lemma}\label{lemma minimum}
    Let $P \in X_F$ and let $u,v \in F$ be positive at $P$. Consider 
    \[
        f : F_P \setminus \{0\} \to F_P, \ x \mapsto ux^2 + \frac{v}{x^2}.
    \]
    \begin{enumerate}[label={\rm{(\arabic*)}}]
        \item The function $f$ attains its minimum $2\sqrt{uv}$ at $x^* = (\frac{v}{u})^\frac{1}{4}$.
        \item Assume that $F$ is dense in $F_P$. For any $t \in F_P$ with $t >_\tP f(x^*)$, there exists $\beta \in F$ such that $f(\beta) \in (f(x^*),t) \subset F_P \setminus \{0\}$.
    \end{enumerate}
\end{lemma}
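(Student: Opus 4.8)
The plan is to derive (i) from an arithmetic--geometric-mean type identity and (ii) from the continuity of $f$ together with the density hypothesis.

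\medskip
\noindent\textbf{Part (i).} Since $F_P$ is real closed and $u,v$ are positive at $P$, the elements $\sqrt u$, $\sqrt v$ and $\sqrt{uv}=\sqrt u\sqrt v$ all lie in $F_P$. I would then verify the identity
\[
    f(x)-2\sqrt{uv}\;=\;u x^{2}+\frac{v}{x^{2}}-2\sqrt{uv}\;=\;\Bigl(\sqrt u\,x-\frac{\sqrt v}{x}\Bigr)^{2}
\]
for every $x\in F_P\setminus\{0\}$, which is immediate upon expanding the right-hand side. Hence $f(x)\geq_\tP 2\sqrt{uv}$ for all such $x$, with equality exactly when $\sqrt u\,x=\sqrt v/x$, i.e. when $x^{2}=\sqrt{v/u}$, i.e. when $x=\pm(v/u)^{1/4}$. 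A direct substitution then confirms $f(x^*)=2\sqrt{uv}$ for $x^*=(v/u)^{1/4}$, so $f$ attains the value $2\sqrt{uv}$ at $x^*$ and this is its minimum.

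\medskip
\noindent\textbf{Part (ii).} The function $f$ is a rational function whose denominator does not vanish on $F_P\setminus\{0\}$, hence it is continuous for the order topology on $F_P$ (this, and the other elementary-calculus facts we use, remain valid over an arbitrary real closed field, cf. \cite[Section~1.3]{S24}). Write $c:=f(x^*)=2\sqrt{uv}$; since $u,v$ are positive at $P$ we have $c>_\tP 0$, so the interval $(c,t)$ is contained in $F_P\setminus\{0\}$. Because $c<_\tP t$ and $f$ is continuous at $x^*$, there is some $\varepsilon>_\tP 0$, which we may take small enough that $x^*-\varepsilon>_\tP 0$, such that $f(x)<_\tP t$ for every $x$ in the open interval $(x^*-\varepsilon,x^*+\varepsilon)$ (which then lies in $F_P\setminus\{0\}$). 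On the other hand, by part (i) we have $f(x)>_\tP c$ for every $x\neq\pm x^*$, in particular for every $x\in(x^*,x^*+\varepsilon)$. Thus $(x^*,x^*+\varepsilon)$ is a nonempty open subset of $F_P$ on which $c<_\tP f<_\tP t$; as $F$ is dense in $F_P$ it contains some $\beta\in F$, and for this $\beta$ we obtain $f(\beta)\in(c,t)\subset F_P\setminus\{0\}$, as required.

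\medskip
I expect the argument to be essentially routine once the identity in (i) is in hand. The only points needing a little care are that the neighbourhood of $x^*$ on which we work must be kept away from $0$ (so that $f$ is defined there) and that we must stay strictly on one side of $x^*$, so that the lower bound from (i) is strict rather than an equality; both are handled above, and I do not anticipate a genuine obstacle.
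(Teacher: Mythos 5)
Your proof is correct, and its overall skeleton (continuity of $f$ plus density of $F$ in $F_P$ produces the required $\beta$) is the same as the paper's; the difference is in how the ingredients are supplied. For (i) the paper simply cites the elementary-calculus facts valid over real closed fields in \cite[Section~1.3]{S24}, whereas you prove it outright via the identity $f(x)-2\sqrt{uv}=(\sqrt{u}\,x-\sqrt{v}/x)^2$, which moreover gives you the \emph{strict} inequality $f(x)>_\tP 2\sqrt{uv}$ for $x\neq\pm x^*$. For (ii) the paper takes $J:=f^{-1}\bigl((f(x^*),t)\bigr)$ and invokes \cite[Corollary~1.3.10(b)]{S24} (an intermediate-value-type statement) to see that $J$ is a non-empty open subset of $F_P\setminus\{0\}$, then applies density; you instead build a concrete open interval $(x^*,x^*+\varepsilon)$ inside $J$ by combining continuity of $f$ at $x^*$ (so $f<_\tP t$ near $x^*$) with the strictness from your identity (so $f>_\tP f(x^*)$ off $\pm x^*$), and then apply density to that interval. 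Your route is more self-contained, avoiding the intermediate value theorem at the cost of needing the strict characterization of the minimizers, which your proof of (i) provides; note that the one external fact you still rely on, namely that rational functions are continuous for the order topology with the usual $\varepsilon$--$\delta$/open-preimage formulation over an arbitrary real closed field, is exactly the kind of statement the paper's citation to \cite[Section~1.3]{S24} covers, so you have not actually eliminated that reference, only the intermediate-value corollary. You also correctly handle the two small pitfalls: keeping the interval away from $0$ (via $x^*-\varepsilon>_\tP 0$, and in fact $\beta>_\tP x^*>_\tP 0$) and staying strictly to one side of $x^*$ so that $f(\beta)$ lands strictly above $f(x^*)$.
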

\begin{proof}
    (1): This follows from some results from elementary calculus which remain valid over any real closed field, cf. \cite[Section~1.3]{S24}.

    (2): Consider the open interval $I:=(f(x^*),t)\subset F_P\setminus\{0\}$. Note that $f$ is a non-constant continuous function on $F_P\setminus\{0\}$ that attains its minimum at $x^*$ by (1). As a result, $J:=f^{-1}(I)$ is a non-empty open
    subset of $F_P\setminus\{0\}$, cf. \cite[Corollary~1.3.10(b)]{S24}. Since $F$ is assumed dense in $F_P$, there
    exists a nonzero element $\beta \in J\cap F$. The statement follows.
\end{proof}

\begin{proof}[Proof of Theorem \ref{theorem main}]  
  Since (pre)positive cones are preserved under hermitian Morita 
  equivalence by \cite[Theorem~4.2]{PosCones}, 
  we can freely replace $(A,\s)$ by any algebra in the Brauer class of $A$ and 
  any 
  involution of the same type as $\s$.  
  
    Assume first that $A$ is split, i.e., $A \cong M_2(K)$. If $\s$ is
    symplectic, then $P \in \Nil[A,\s]$ since $K=F$ and clearly $A \ox_F F_P
    \cong M_2(F_P)$. Thus, by Remark \ref{remark no pcs over nil}, there are no
    (pre)positive cones on $(A,\s)$. 
    
    If $\s$ is orthogonal or unitary, then $\s$ is of the same
    type as the conjugate transpose involution $\iota^t : (a_{k\ell})
    \mapsto (\iota(a_{\ell k}))$, where $\iota=\id_F$ in case $\s$ is 
    orthogonal. Thus we may assume that $\s = \iota^t$.
    Hence, there are only two prepositive cones on $(A,\s)$ and they are both
    maximal by Example~\ref{ex split transpose}.

    We may therefore assume that $A$ is a quaternion division algebra. It follows from \cite[Proposition~4.4]{A-U-PS} that $m_P(A,\s) \leq 2$. In fact, in all relevant cases below, we will see that $m_P(A,\s)=2$. Hence, by Example \ref{ex div alg},
    \[
    \Pscr := \{ a \in \Sym(A,\s)^\x \ | \ \sign{a}=2 \} \cup \{ 0 \}
    \] 
    and $-\Pscr$ are the only positive cones on $(A, \s)$ over $P$,
    where $\eta$ is a fixed reference form for $(A,\s)$.
    Note that if we replace $\eta$ by $-\eta$, then $\Pscr$ is replaced by
    $-\Pscr$. Thus, without loss of generality, we may assume that $s_P=1$
    and we simply write $\ssign_P$ instead of $\ssign_P^\eta$. 
    \medskip
    
    \noindent\emph{Case 1: ${\s}$ symplectic.} In this case, $\s$ is quaternion conjugation. Thus, $\Sym(A,\s) = F$ and it follows that all
prepositive cones on $(A,\s)$ over $P$ are maximal by Example \ref{ex split
transpose}.
\medskip
    
    \noindent\emph{Case 2: ${\s}$ orthogonal.} We write $A = 
    (a,b)_F$ with $a,b \in F^\x$. If $a<_P 0, \, b<_P 0$, we have that 
    $A \ox_F F_P
\cong (-1,-1)_{F_P}$. As a result, $P \in \text{Nil}[A,\s]$ since $\s$ is
orthogonal and there are no (pre)positive cones on $(A,\s)$ over $P$ by
Remark~\ref{remark no pcs over nil}. 

For the remaining cases we may assume without loss
of generality that $a>_P 0$ and $b>_P 0$ using
the isomorphisms 
$(a,b)_F\cong (b,a)_F$ and $(a,b)_F\cong (a,-ab)_F$. Furthermore, 
by the observation at the start of the proof,
we may assume without loss of generality
that $\s$ is the involution that sends $k$ to $-k$ and fixes $1,i,$ and $j$.
Consider the splitting isomorphism $\lambda_P: A \ox_F F_P \rightarrow
M_2(F_P)$ given by the assignments:
    \begin{align*}
        1 \ox 1 &\mapsto \begin{bmatrix}
            1 & 0\\
            0 & 1
        \end{bmatrix},&
        i \ox 1 &\mapsto \begin{bmatrix}
            \sqrt{a} & 0\\
            0 & -\sqrt{a}
        \end{bmatrix},\\
        j \ox 1 &\mapsto \begin{bmatrix}
            0 & \sqrt{b}\\
            \sqrt{b} & 0
        \end{bmatrix},&
        k \ox 1 &\mapsto \begin{bmatrix}
            0 & \sqrt{ab}\\
            -\sqrt{ab} & 0
        \end{bmatrix}.
    \end{align*}
    Under $\lambda_P$, the involution $\s \ox \id$ corresponds to the transposition
involution $X \mapsto X^t$. We see that $\ssign_P\qf{1}_\s=2$, and so 
$m_P(A,\s)=2$.
    \medskip
 
    \begin{claim}
         Let $d = d_0 + d_1i + d_2j \in \Sym(A,\s)$. Then 
         $\sssign{d} = \pm 2$  if and only if $d_0^2 >_P ad_1^2 + 
         bd_2^2$, in which case $\sssign{d} = 2$ if and only if $d_0 >_P 0$.
    \end{claim}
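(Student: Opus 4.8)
The plan is to unwind the recipe for the $\eta$-signature in the split-orthogonal case (case~a) of Section~\ref{Sect Quat alg}) using the explicit splitting isomorphism $\lambda_P$ already fixed above. First I would record that, since $\s \ox \id$ corresponds to transposition under $\lambda_P$, the symmetric matrix $\Phi_P$ may be taken to be $I_2$; hence for $h = \qf{d}_\s$ the associated nonsingular symmetric bilinear form $b_h$ over $F_P$ has Gram matrix $M := \lambda_P(d \ox 1)$. Likewise $\eta = \qf{1}_\s$ has associated Gram matrix $I_2$, whose Sylvester signature is $2 >_\tP 0$, so the sign $s_P$ occurring in $\sign{d} = s_P \cdot \Sign_\tP b_h$ is $+1$, and therefore $\sign{d} = \Sign_\tP M$.

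Next I would compute $M$ directly from the defining assignments of $\lambda_P$:
\[
    M \;=\; d_0 I_2 + d_1\begin{bmatrix} \sqrt a & 0 \\ 0 & -\sqrt a\end{bmatrix} + d_2\begin{bmatrix} 0 & \sqrt b \\ \sqrt b & 0\end{bmatrix} \;=\; \begin{bmatrix} d_0 + d_1\sqrt a & d_2\sqrt b \\ d_2\sqrt b & d_0 - d_1\sqrt a\end{bmatrix},
\]
so that $\det M = d_0^2 - a d_1^2 - b d_2^2$ and $\operatorname{tr} M = 2d_0$, both lying in $F$.

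Then I would invoke the elementary description of Sylvester signatures of $2\times 2$ symmetric matrices over the real closed field $F_P$: the two eigenvalues $\mu_1,\mu_2$ of $M$ satisfy $\mu_1\mu_2 = \det M$ and $\mu_1 + \mu_2 = \operatorname{tr} M$, so $\Sign_\tP M = \pm 2$ precisely when $\det M >_\tP 0$, and in that case $\Sign_\tP M = 2$ precisely when $\operatorname{tr} M >_\tP 0$. Since $\det M, \operatorname{tr} M \in F$, these two conditions read $d_0^2 >_P a d_1^2 + b d_2^2$ and $d_0 >_P 0$, which is exactly the claim. I would also remark in passing that $\sign{d} = \pm 2$ forces $M$, hence $d$, to be invertible, so $\qf{d}_\s$ is indeed nonsingular in that case and the computation of Section~\ref{Sect Quat alg} applies without reservation.

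The computation is entirely routine, so there is no real obstacle; the only point that needs a little care is the bookkeeping in the first step — checking that the chosen $\lambda_P$ genuinely yields $\Phi_P = I_2$ (and not some other symmetric matrix) and hence $s_P = +1$, so that no sign is lost when passing from $\Sign_\tP b_h$ to $\sign{d}$.
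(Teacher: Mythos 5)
Your proposal is correct and follows essentially the same route as the paper: compute $\Phi_P^{-1}\lambda_P(d\ox 1)$ explicitly (with $\Phi_P=I_2$, $\eta=\qf{1}_\s$, $s_P=+1$) and read off the Sylvester signature over $F_P$. The only cosmetic difference is that you determine the signs of the two eigenvalues from $\det M$ and $\operatorname{tr} M$, whereas the paper diagonalizes the matrix via the principal axis theorem and inspects the diagonal entries (noting, as you do, that the $\pm 2$ condition is equivalent to positivity of the determinant).
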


    \begin{claimproof}
        Since $\Phi_P = \begin{bmatrix}
             1 & 0\\
             0 & 1
        \end{bmatrix} = \Phi_P^{-1}
        $,
        we have that
        \[
            \Phi_P^{-1} \cdot \lambda_P(d \ox 1) = \begin{bmatrix}
            d_0+d_1\sqrt{a} & d_2\sqrt{b} \\
            d_2\sqrt{b} & d_0-d_1\sqrt{a}
            \end{bmatrix}.
        \]
        By the principal axis theorem (which holds for the real closed field $F_P$), this matrix is congruent to the diagonal matrix
        \[
            \begin{bmatrix}
                d_0-\sqrt{ad_1^2+bd_2^2} & 0 \\
                0 & d_0+\sqrt{ad_1^2+bd_2^2}
            \end{bmatrix}.
        \]
        The claim follows by considering the positivity of elements along the diagonal, or equivalently, the determinant.\qed
    \end{claimproof}

    \medskip

    Assume now that $\Qscr$ is a prepositive cone on $(A,\s)$ over $P$ and that
    $\Qscr \subseteq \Pscr$. We show that $\Cscr_P(1) \subseteq \Qscr$ in this
    case. Indeed, let $u = u_0 + u_1i + u_2j \in \Qscr \setminus \{0\}$. Then
    $\Cscr_P(u) \subseteq \Qscr$ and $\Cscr_P(u)$ is a prepositive cone (since
    $\Qscr$ is one). Notice that
    \[
        \begin{aligned}
            \s(i)ui &= a(u_0 + u_1i - u_2j), \\
            \s(j)uj &= b(u_0 - u_1i + u_2j). 
        \end{aligned}
    \]
    Thus, $1=\frac{1}{2 a u_0} \s(i)ui + \frac{1}{2 b u_0} \s(j)uj \in
    \mathscr{C}_P(u)$ since $u_0 >_P 0$ by the above claim. Therefore,
    $\mathscr{C}_P(1) \subseteq \mathscr{C}_P(u) \subseteq \mathscr{Q}
    \subseteq \mathscr{P}$. Likewise, if we assume $\Qscr \subseteq -\Pscr$ and
    let $u \in \Qscr\setminus \{0\}$, then $\mathscr{C}_P(-1) \subseteq
    \mathscr{C}_P(u) \subseteq \mathscr{Q} \subseteq -\mathscr{P}$.

    To finish the proof in this case, we will show that $\Cscr_P(1)= \Pscr$.
    Therefore, also $\Cscr_P(-1) = -\Pscr$. This then implies that $\Qscr =
    \Pscr$ or $\Qscr = -\Pscr$, i.e., that $\Qscr$ is always maximal. As we
    immediately have that $\Cscr_P(1) \subseteq \Pscr$, we need only show the
    opposite inclusion. Let $d = d_0 + d_1i +d_2j \in \Pscr \setminus \{0\}$.
    We wish to show that $d \in \Cscr_P(1)$, i.e., we must attain an expression
    $d = \sum_{n=1}^{N} u_n \s(x_n) x_n$ for some $N \in \N$, $u_n \in P$, and
    $x_n \in A$.

    Consider the function 
    \[
        f: F_P \setminus \{0 \} \to F_P,\ x \mapsto x^2+\frac{ad_1^2 + 
        bd_2^2}{4x^2}. 
    \]
    Since $d\in \Pscr \setminus \{0\}$, we know by the claim that $d_0^2 >_P
    ad_1^2 +bd_2^2$ and $d_0 >_P 0$. Considered as elements of $F_P$, we can
    write $d_0 >_{\tP} \sqrt{ad_1^2 +bd_2^2} $. By Lemma \ref{lemma minimum},
    the continuous function $f$ attains its minimum $\sqrt{ad_1^2 + bd_2^2}$ at
    $x^* = \frac{1}{\sqrt{2}}(ad_1^2+bd_2^2)^\frac{1}{4}$ and there exists
    $\beta \in F$ such that $f(\beta) \in (\sqrt{ad_1^2 +bd_2^2}, d_0)\subset
    F_P\setminus\{0\}$. Then $f(\beta)<_{\tP} d_0$, and since $f(\beta)$ and
    $d_0$ are both in $F$, actually $f(\beta)<_P d_0$. It follows that
    $d_0-f(\beta)>_P 0$, and so $d_0-f(\beta)=(d_0-f(\beta))\s(1)1 \in
    \Cscr_P(1) $.

    Let 
    $c=\beta +\tfrac{d_1}{2\beta}i +\tfrac{d_2}{2\beta}j$.  Then
    $\s(c)c=f(\beta) +d_1i +d_2j \in \Cscr_P(1)$. Finally,
    $d=\s(c)c + (d_0-f(\beta))\s(1)1 \in \Cscr_P(1)$, as required. \medskip

    \noindent\emph{Case 3: $\s$ unitary.} As remarked in 
    Section~\ref{Sect Quat alg}, we can write $A = A_0 \ox_F K$, where $K=
F(\rootd)$ for some $\dd \in F^\x$ such that $[K:F] =2$, $A_0$ is a quaternion
algebra over $F$, and $\s = \gamma_0 \ox \iota$. Note that $A_0$ is a division
algebra since $A$ is assumed to be a division algebra. If $ \dd >_P 0$, then $K
\ox_F F_P \cong F_P \x F_P$, and so $P$ is a nil-ordering, which precludes the
existence of (pre)positive cones by Remark~\ref{remark no pcs over nil}. Hence,
we may further assume $\dd <_P 0$. We write $A_0=(a,b)_F$ for some $a,b \in
F^\x$. 
Without loss
of generality we may choose $a>_P 0$ and $b>_P 0$ using
the isomorphisms 
$(a,b)_F\cong (b,a)_F$, $(a,b)_F\cong (a,-ab)_F$, and
$(a,b)_F\ox_F K \cong (a\delta, b\delta)_F \ox_F K$ (since $\delta$ is a square 
in $K=F(\sqrt{\delta})$).

Let $\{1,i,j,k\}$ be the usual $F$-basis of $A_0$. Since $K$ has an
$F$-basis $\{1, \rootd\}$, we then have an $F$-basis for $A$ given by $\{1\ox1,
i\ox1, j\ox1, k\ox1, 1\ox\rootd, i\ox\rootd, j\ox\rootd, k\ox\rootd \}$. An
$F$-basis for $\Sym(A,\s)$ is given by $\{1\ox 1, i \ox \rootd, j \ox \rootd, k
\ox \rootd\}$.

Consider the splitting map $\lambda_P : A \ox_F F_P \to M_2(F_P(\sqrt{-1}))$ given by the assignments:
    \begin{align*}
        (1 \ox 1) \ox 1 &\mapsto 
        \begin{bmatrix}
            1 & 0 \\ 0 & 1
        \end{bmatrix},
        &
        (1 \ox \rootd) \ox 1 &\mapsto 
        \begin{bmatrix} 
            \rootd & 0 \\ 0 & \rootd
        \end{bmatrix}, \\
        (i \ox 1) \ox 1 &\mapsto 
        \begin{bmatrix}
            \sqrt{a} & 0 \\ 0 & -\sqrt{a}
        \end{bmatrix},
        &
        (i \ox \rootd) \ox 1 &\mapsto 
        \begin{bmatrix} 
            \sqrt{a\dd} & 0 \\ 0 & -\sqrt{a\dd}
        \end{bmatrix}, \\
        (j \ox 1) \ox 1 &\mapsto 
        \begin{bmatrix}
            0 & \sqrt{-b} \\ -\sqrt{-b} & 0
        \end{bmatrix},
        &
        (j \ox \rootd) \ox 1 &\mapsto 
        \begin{bmatrix} 
            0 & \sqrt{-b\dd} \\ -\sqrt{-b\dd} & 0
        \end{bmatrix},\\
        (k \ox 1) \ox 1 &\mapsto 
        \begin{bmatrix} 
            0 & \sqrt{-ab} \\ \sqrt{-ab} & 0
        \end{bmatrix},
        &
        (k \ox \rootd) \ox 1 &\mapsto 
        \begin{bmatrix} 
            0 & \sqrt{-ab\dd} \\ \sqrt{-ab\dd} & 0
        \end{bmatrix}.
    \end{align*}
    Under $\lambda_P$, the involution $\s \ox \id = (\gamma_0 \ox \iota) \ox \id$ corresponds to the involution
    \[
        X \mapsto \Phi_P \, \overline{X}^t \Phi_P^{-1}, \quad \text{where} \quad \Phi_P = \begin{bmatrix}
            0 & 1\\
            1 & 0
        \end{bmatrix} = \Phi_P^{-1},
    \]
    and where $-^t$ is the conjugate transpose involution induced by the map $\sqrt{-1} \mapsto -\sqrt{-1}$. Since
    \[
        \Phi_P^{-1} \cdot \lambda_P((k \ox \rootd) \ox 1) = 
            \begin{bmatrix}
                \sqrt{-ab\dd} & 0\\
                0 & \sqrt{-ab\dd}
            \end{bmatrix},
    \]
    we have $\sssign{k \ox \rootd} = 2$. 
    In particular, $m_P(A,\s)=2$.
    \medskip
    \begin{claim}
        Let $d = d_0(1\ox1) + d_5(i\ox\rootd) + d_6(j\ox\rootd) +d_7(k\ox\rootd) \in \Sym(A,\s)$. Then $\sssign{d} = \pm 2$ if and only if $d_7^2 >_P \frac{1}{-ab\dd} (d_0^2 - a\dd d_5^2 - b\dd d_6^2)$, in which case, $\sssign{d} = 2$ if and only if $d_7 >_P 0$.
    \end{claim}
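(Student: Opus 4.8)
The plan is to follow the template of the two earlier claims in Cases~2.i and 2.ii: compute the Gram matrix of the hermitian form $b_d$ over $(F_P(\sqrt{-1}),\iota_P)$ that is attached to $\qf{d}_\s$ under $\lambda_P$, diagonalise it by the principal axis theorem, and read the $\eta$-signature off the diagonal. First I would apply $\lambda_P$ to $d\ox 1$ using the displayed table of assignments and left-multiply by $\Phi_P^{-1}=\Phi_P=\left[\begin{smallmatrix}0&1\\1&0\end{smallmatrix}\right]$, obtaining
\[
    H := \Phi_P^{-1}\cdot\lambda_P(d\ox 1) = \begin{bmatrix}
        -d_6\sqrt{-b\dd}+d_7\sqrt{-ab\dd} & d_0-d_5\sqrt{a\dd}\\
        d_0+d_5\sqrt{a\dd} & d_6\sqrt{-b\dd}+d_7\sqrt{-ab\dd}
    \end{bmatrix}.
\]

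Next I would record the relevant signs at $P$. Since $\dd<_P 0$, $a>_P 0$ and $b>_P 0$, we have $a\dd<_P 0$, so $\sqrt{a\dd}=\sqrt{-1}\cdot\sqrt{-a\dd}$ is purely imaginary, whereas $-b\dd>_P 0$ and $-ab\dd>_P 0$, so $\sqrt{-b\dd}$ and $\sqrt{-ab\dd}$ lie in $F_P$. From this it is immediate that $H$ is hermitian with respect to $\iota_P$: its diagonal entries lie in $F_P$ and its off-diagonal entries are $\iota_P$-conjugate. In particular the trace and determinant of $H$ lie in $F_P$, and a direct computation gives that the trace equals $2d_7\sqrt{-ab\dd}$ and
\[
    \det H = -ab\dd\, d_7^2 - \bigl(d_0^2 - a\dd d_5^2 - b\dd d_6^2\bigr).
\]

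I would then apply the principal axis theorem over the real closed field $F_P$ to diagonalise the hermitian matrix $H$. As $H$ is $2\x 2$, its Sylvester signature $\Sign_\tP b_d$ equals $\pm 2$ exactly when $H$ is definite, i.e.\ when $\det H>_\tP 0$, and equals $0$ when $\det H<_\tP 0$. Because $-ab\dd>_P 0$, the inequality $\det H>_\tP 0$ rearranges precisely to $d_7^2>_P\frac{1}{-ab\dd}(d_0^2-a\dd d_5^2-b\dd d_6^2)$. When this holds, the two diagonal entries of a diagonalisation of $H$ have a common sign, namely the sign of the trace of $H$; hence $\Sign_\tP b_d=2$ if and only if $2d_7\sqrt{-ab\dd}>_\tP 0$, i.e.\ (using $\sqrt{-ab\dd}>_P 0$) if and only if $d_7>_P 0$.

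Finally, to pass from $\Sign_\tP b_d$ to $\sign{d}$ I would invoke the normalisation of $\eta$ already fixed in the text: specialising the computation above to $d=k\ox\rootd$ (that is, $d_0=d_5=d_6=0$ and $d_7=1$) yields $H=\left[\begin{smallmatrix}\sqrt{-ab\dd}&0\\0&\sqrt{-ab\dd}\end{smallmatrix}\right]$, which has Sylvester signature $+2$; since $\eta$ was chosen so that $\sign{k\ox\rootd}=2$, the scaling sign $s_P$ from case~c) of Section~\ref{Sect Quat alg} is $+1$, and therefore $\sign{d}=\Sign_\tP b_d$ for every $d\in\Sym(A,\s)^\x$. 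Combining this with the previous paragraph gives the claim. The only subtlety I anticipate is the bookkeeping of which square roots land in $F_P$ and which in $F_P(\sqrt{-1})$---this is exactly where the standing hypotheses $\dd<_P 0$, $a>_P 0$, $b>_P 0$ enter---together with correctly extracting $s_P=+1$ from the chosen reference form $\eta$; the determinant and trace computations themselves are routine.
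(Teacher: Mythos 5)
Your proposal is correct and follows essentially the same route as the paper: compute $\Phi_P^{-1}\cdot\lambda_P(d\ox 1)$, invoke the principal axis theorem over $(F_P(\sqrt{-1}),\iota_P)$, and read off the signature from definiteness and the sign of the leading data (the paper explicitly writes the congruent diagonal matrix with entries $d_7\sqrt{-ab\dd}\mp\sqrt{d_0^2-a\dd d_5^2-b\dd d_6^2}$, while you equivalently use the determinant and trace, a reformulation the paper itself sanctions). Your explicit check that the normalisation $\sign{k\ox\rootd}=2$ forces $s_P=+1$ is a careful touch that the paper leaves implicit, but it is not a different argument.
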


    \medskip
    
    \begin{claimproof}
        We have that
        \[
            \Phi_P^{-1} \cdot \lambda_P(d \ox 1) = 
            \begin{bmatrix}
                -d_6\sqrt{-b\dd} +d_7\sqrt{-ab\dd} & d_0-d_5\sqrt{a\dd}\\
                d_0+d_5\sqrt{a\dd} & d_6\sqrt{-b\dd}+d_7\sqrt{-ab\dd}
            \end{bmatrix}.
        \]
        By the principal axis theorem (the hermitian version of which holds for 
        the field $F_P(\sqrt{-1})$), this 
        matrix is congruent to the diagonal matrix
        \[
            \begin{bmatrix}
                d_7\sqrt{-ab\dd}-\sqrt{d_0^2-a\dd d_5^2-b\dd d_6^2} & 0\\
                0 & d_7\sqrt{-ab\dd}+\sqrt{d_0^2-a\dd d_5^2-b\dd d_6^2}
            \end{bmatrix}.
        \]
        The claim follows by considering the positivity of elements along the diagonal, or equivalently, the determinant.\qed
    \end{claimproof}

    \medskip

    Assume now that $\Qscr$ is a prepositive cone on $(A, \s)$ over $P$ and
    that $\Qscr \subseteq \Pscr$. We show that $\Cscr_P(k \ox \rootd) \subseteq
    \Qscr$. Indeed, let $u = u_0(1\ox 1) + u_5(i\ox \rootd) + u_6(j\ox \rootd)
    + u_7(k\ox \rootd) \in \Qscr\setminus \{0\}$. Then $\Cscr_P(u) \subseteq
    \Qscr$ and
    $\Cscr_P(u)$ is a prepositive cone (since $\Qscr$ is one). Notice that
    \begin{align*}
        x_1 := &\ \s(1\ox 1) u (1\ox 1)\\
        = &\ u_0 (1\ox 1) + u_5(i \ox \rootd) + u_6 (j\ox \rootd) + u_7(k \ox \rootd),\\
        x_2 := &\ \s(i\ox \rootd) u (i\ox \rootd)\\
        = &\ a\dd (u_0 (1\ox 1) + u_5(i \ox \rootd) - u_6 (j\ox \rootd) - u_7(k \ox \rootd)),\\
        x_3 := &\ \s(j\ox \rootd) u (j\ox \rootd)\\
        = &\ b\dd (u_0 (1\ox 1) - u_5(i \ox \rootd) + u_6 (j\ox \rootd) - u_7(k \ox \rootd)),\\
        x_4 := &\ \s(k\ox \rootd) u (k\ox \rootd)\\
        = &\ ab\dd (-u_0 (1\ox 1) + u_5(i \ox \rootd) + u_6 (j\ox \rootd) - u_7(k \ox \rootd)),
    \end{align*}
    allowing us to express $k\ox \rootd$ as a typical element of $\Cscr_P(u)$: We have
    \[
        k\ox \rootd = \frac{x_1}{4u_7} -\frac{x_2}{4a\dd u_7} -\frac{x_3}{4b\dd u_7} -\frac{x_4}{4ab\dd u_7} \in \Cscr_P(u)
    \]
    since $u_7 >_P 0$ by the above claim and $\dd <_P 0$. Therefore, $\Cscr_P(k\ox \rootd) \subseteq \Cscr_P(u) \subseteq \Qscr \subseteq \Pscr$. Likewise, if $\Qscr \subseteq -\Pscr$ and $u \in \Qscr\setminus \{0\}$ then $\Cscr_P(-k\ox \rootd) \subseteq \Cscr_P(u) \subseteq \Qscr \subseteq -\Pscr$.

    To finish the proof in this case, we will show that $\Cscr_P(k \ox \rootd) = \Pscr$. Therefore, also $\Cscr_P(-k \ox \rootd) = -\Pscr$. This then implies that $\Qscr = \Pscr$ or $\Qscr = -\Pscr$, i.e., that $\Qscr$ is always maximal. As we immediately have that $\Cscr_P(k \ox \rootd) \subseteq \Pscr$, we need only show the opposite inclusion. Let $d = d_0(1\ox1) + d_5(i\ox\rootd) + d_6(j\ox\rootd) +d_7(k\ox\rootd) \in \Pscr \setminus \{0\}$. To prove $\Pscr \subseteq \Cscr_P(k \ox \rootd)$, we wish to show that $d = \sum_{n=1}^{N} u_n \s(x_n) (k \ox \rootd) x_n$ for some $N \in \N$, $u_n \in P$, and $x_n \in A$, so that $d \in \Cscr_P(k \ox \rootd)$.

    Consider the function
    \[
        f: F_P \setminus \{0\} \to F_P,\ x \mapsto - ab\dd x^2 + \frac{d_0^2-a\dd d_5^2 -b\dd d_6^2 }{4a^2b^2\dd^2x^2}.
    \]
    Since $d \in \Pscr \setminus \{0\}$, we know by the claim that $d_7^2 >_P \frac{1}{-ab\dd} (d_0^2 - a\dd d_5^2 - b\dd d_6^2)$ and that $d_7 >_P 0$. Considered as elements of $F_P$, we can write $d_7 >_\tP \frac{1}{\sqrt{-ab\dd}}\sqrt{d_0^2-a\dd d_5^2-b\dd d_6^2}$. By Lemma \ref{lemma minimum}, the continuous function $f$ attains its minimum 
    \[
        \frac{1}{\sqrt{-ab\dd}}\sqrt{d_0^2-a\dd d_5^2-b\dd d_6^2} \quad \text{at}\quad x^* = \frac{(d_0^2-a\dd d_5^2-b\dd d_6^2)^{\frac{1}{4}}}{(-4a^3b^3\dd^3)^{\frac{1}{4}}},
    \]
    and there exists $\beta \in F$ such that 
    \[
    f(\beta) \in \left( \frac{1}{\sqrt{-ab\dd}}\sqrt{d_0^2-a\dd d_5^2-b\dd d_6^2}, d_7 \right) \subset F_P \setminus \{0\}.
    \]
    Then $f(\beta)<_{\tP} d_7$, and since $f(\beta)$ and $d_7$ are
    both in $F$, actually $f(\beta)<_P d_7$. It follows that 
    $d_7-f(\beta)>_P 0$, and so $(d_7-f(\beta))(k \ox \rootd)=(d_7-f(\beta))\s(1 \ox 1)(k \ox \rootd)(1 \ox 1) \in \Cscr_P(k \ox \rootd)$.

    Let $c= \frac{-d_0}{2ab\dd \beta}(1 \ox 1) - \frac{d_5}{2ab\dd \beta}(i \ox \rootd) - \frac{d_6}{2ab \dd \beta}(j \ox \rootd) + \beta (k \ox \rootd)$. Then 
    \[
        \s(c)(k \ox \rootd)c = d_0(1 \ox 1) + d_5(i \ox \rootd) + d_6(j \ox \rootd) 
        + f(\beta)(k \ox \rootd) \in \Cscr_P (k \ox \rootd).
    \]
    Finally, we can write 
    \[
        d = \s(c)(k \ox \rootd)c + (d_7-f(\beta))\s(1 \ox 1)(k \ox \rootd)(1 \ox 1) \in \Cscr_P(k \ox \rootd),
    \]
    as required.
\end{proof}

\section*{Acknowledgments}
I would like to thank Thomas Unger for our discussions on the topic of this paper.


\end{document}